\numberwithin{equation}{section}
\theoremstyle{plain}
\newtheorem{thm}{Theorem} [section]
\newtheorem{prp}{Proposition} [section]
\newtheorem{cor}{Corollary} [section]
\newtheorem{lem}{Lemma} [section]
\newtheorem{defin}{Definition} [section]
\newtheorem{exa}{Example} [section]
\newcommand{\e}{\varepsilon}
\newcommand{\N}{\mathbb{N}}
\newcommand{\C}{\mathbb{C}}
\begin{document}

\title{$(s,p)$-Valent Functions}

\begin{abstract}
We introduce the notion of $(\mathcal F,p)$-valent functions. We concentrate in our investigation on the case, where $\mathcal F$ is the class of polynomials of degree at most $s$. These functions, which we call $(s,p)$-valent functions, provide a natural generalization of $p$-valent functions (see~\cite{Ha}). We provide a rather accurate characterizing of $(s,p)$-valent functions in terms of their Taylor coefficients, through ``Taylor domination'', and through linear non-stationary recurrences with uniformly bounded coefficients. We prove a ``distortion theorem" for such functions, comparing them with polynomials sharing their zeroes, and obtain an essentially sharp Remez-type inequality in the spirit of~\cite{Y3} for complex polynomials of one variable. Finally, based on these results, we present a Remez-type inequality for $(s,p)$-valent functions.
\end{abstract}

\author{Omer Friedland}
\address{Institut de Math\'ematiques de Jussieu, Universit\'e Pierre et Marie Curie (Paris 6), 4 Place Jussieu, 75005 Paris, France.}
\email{omer.friedland@imj-prg.fr}
\thanks{}

\author{Yosef Yomdin}
\address{Department of Mathematics, The Weizmann Institute of Science, Rehovot 76100, Israel.}
\email{yosef.yomdin@weizmann.ac.il}

\maketitle

\section{Introduction} \label{Intro}

Let us introduce the notion of ``$(\mathcal F,p)$-valent functions". Let $\mathcal F$ be a class of functions to be precise later. A function $f$ regular in a domain $\Omega\subset\C$ is called $(\mathcal F,p)$-valent in $\Omega$ if for any $g\in\mathcal F$ the number of solutions of the equation $f(z) = g(z)$ in $\Omega$ does not exceed $p$.

\smallskip

For example, the classic $p$-valent functions are obtained for $\mathcal F$ being the class of constants, these are functions $f$ for which the equation $f = c$ has at most $p$ solutions in $\Omega$ for any $c$. There are many other natural classes $\mathcal F$ of interest, like rational functions, exponential polynomial, quasi-polynomials, etc. In particular, for the class ${\mathcal R}_s$ consisting of rational functions $R(z)$ of a fixed degree $s$, the number of zeroes of $f(z)-R(z)$ can be explicitly bounded for $f$ solving linear ODEs with polynomial coefficients (see, e.g.\ \cite{Bin}). Presumably, the collection of $({\mathcal R}_s,p)$-valent functions with explicit bounds on $p$ (as a function of $s$) is much wider, including, in particular, ``monogenic'' functions (or ``Wolff-Denjoy series'') of the form $f(z) = \sum_{j = 1}^\infty {{\gamma_j}\over {z-z_j}}$ (see, e.g.\ \cites{Mar,Sib} and references therein).

\smallskip

However, in this note we shall concentrate on another class of functions, for which $\mathcal F$ is the class of polynomials of degree at most $s$. We denote it in short as $(s,p)$-valent functions. For an $(s,p)$-valent function $f$ the equation $f = P$ has at most $p$ solutions in $\Omega$ for any polynomial $P$ of degree $s$. We shall always assume that $p\ge s + 1$, as subtracting from $f$ its Taylor polynomial of degree $s$ we get zero of order at least $s + 1$. Note that this is indeed a generalization of $p$-valent functions, simply take $s = 0$, and every $(0,p)$-valent function is $p$-valent.

\smallskip

As we shall see this class of $(s,p)$-valent functions is indeed rich and appears naturally in many examples: algebraic functions, solutions of algebraic differential equations, monogenic functions, etc. In fact, it is fairly wide (see Section~\ref{Taylor.Dom}). It possesses many important properties: Distortion theorem, Bernstein-Markov-Remez type inequalities, etc. Moreover, this notion is applicable to any analytic function, under an appropriate choice of the domain $\Omega$ and the parameters $s$ and $p$. In addition, it may provide a useful information in very general situations.

\smallskip

The following example shows that an $(s,p)$-valent function may be not $(s + 1,p)$-valent:

\begin{exa} \label{exa}
Let $f(x) = x^p + x^N$ for $N\ge 10p + 1$. Then, for $s = 0,\dots,p-1$, the function $f$ is $(s,p)$-valent in the disk $D_{1/ 3}$, but only $(p,N)$-valent there.

Indeed, taking $P(x) = x^p + c$ we see that the equation $f(x) = P(x)$ takes the form $x^N = c$. So for $c$ small enough, it has exactly $N$ solutions in the $D_{1/ 3}$. Now, for $s = 0,\dots,p-1$, take a polynomial $P(x)$ of degree $s\le p-1$. Then, the equation $f(x) = P(x)$ takes the form $x^p -P(x) + x^N = 0$. Applying Lemma~3.3 of~\cite{Y1} to the polynomial $Q(x) = x^p -P(x)$ of degree $p$ (with leading coefficient $1$) we find a circle $S_\rho = \{|x| = \rho\}$ with ${1/ 3}\le\rho\le {1/ 2}$ such that $| Q(x) |\ge (1/2)^{10p}$ on $S_\rho$. On the other hand $x^N\le (1/2)^{10p + 1} < (1/2)^{10p}$ on $S_\rho$. Therefore, by the Rouch\'e principle the number of zeroes of $Q(x) + x^N$ in the disk $D_\rho$ is the same as for $Q(x)$, which is at most $p$. Thus, $f$ is $(s,p)$-valent in the disk $D_{1/ 3}$, for $s = 0,\dots,p-1$.
\end{exa}

This paper is organized as follows: in Section~\ref{Taylor.Dom} we characterize $(s,p)$-valent functions in terms of their Taylor domination and linear recurrences for their coefficients. In Section~\ref{Distortion theorem} we prove a Distortion theorem for $(s,p)$-valent functions. In Section~\ref{CP} we make a detour and investigate Remez-type inequalities for complex polynomials, which is interesting in its own right. Finally, in Section~\ref{Remez.Ineq}, we extend the Remez-type inequality to $(s,p)$-valent functions, via the Distortion theorem.

\section{Taylor domination, bounded recurrences} \label{Taylor.Dom}

In this section we provide a rather accurate characterization of $(s,p)$-valent functions in a disk $D_R$ in terms of their Taylor coefficients. ``Taylor domination'' for an analytic function $f(z) = \sum_{k = 0}^\infty a_k z^k$ is an explicit bound of all its Taylor coefficients $a_k$ through the first few of them. This property was classically studied, in particular, in relation with the Bieberbach conjecture: for univalent $f$ we always have $|a_k| \le k|a_1|$ (see~\cites{Bie,Bi,Ha} and references therein). To give an accurate definition, let us assume that the radius of convergence of the Taylor series for $f$ is $\hat R$, for $0<\hat{R}\le + \infty$.

\begin{defin} [Taylor domination] \label{def:T.D} \label{def:domination}
Let $0<R <\hat{R}$, $N\in\N$, and $S(k)$ be a positive sequence of a subexponential growth. The function $f$ is said to possess an $(N,R,S(k))$-Taylor domination property if
$$
|a_k| R^k\le S(k)\max_{i = 0,\dots,N}| a_i |R^i~,\quad k\ge N+1.
$$
%For $S(k)\equiv C$ a constant we shall call this property $(N,R,C)$-Taylor domination.
\end{defin}

The following theorem shows that $f$ is an $(s,p)$-valent function in $D_R$, essentially, if and only if its lower $s$-truncated Taylor series possesses a $(p-s,R,S(k))$-Taylor domination.

\begin{thm} \label{thm:SP.TD}
Let $f(z) = \sum_{k = 0}^\infty a_k z^k$ be an $(s,p)$-valent function in $D_R$, and let $\hat f(z) = \sum_{k = 1}^\infty a_{s + k} z^k$ be the lower $s$-truncation of $f$. Put $m = p-s$. Then, $\hat f$ possesses an $(m,R,S(k))$-Taylor domination, with $S(k) = \left(\frac{A_mk}{m}\right)^{2m}$, and $A_m$ being a constant depending only on $m$.

\smallskip

Conversely, if $\hat f$ possesses an $(m,R,S(k))$-Taylor domination, for a certain sequence $S(k)$ of a subexponential growth, then for $R' < R$ the function $f$ is $(s,p)$-valent in $D_{R'}$, where $p = p(s + m,S(k),R'/R)$ depends only on $m + s$, the sequence $S(k)$, and the ratio $R'/R$. Moreover, $p$ tends to $\infty$ for $R'/R\to 1$, and it is equal to $m + s$ for $R'/R$ sufficiently small.
\end{thm}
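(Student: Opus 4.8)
For the forward implication, the plan is to reduce $(s,p)$-valence of $f$ to ordinary $m$-valence of the truncation $\hat f$ and then invoke the classical coefficient estimates for multivalent functions. Writing $P_s$ for the degree-$s$ Taylor polynomial of $f$, one has $f(z) - P_s(z) = z^s\hat f(z)$, so for any polynomial $P$ of degree at most $s$ the equation $f(z) = P(z)$ is equivalent to $z^s\hat f(z) = Q(z)$ with $Q = P - P_s$ again of degree at most $s$. Choosing $Q(z) = c\,z^s$ for an arbitrary constant $c$ turns this into $z^s(\hat f(z) - c) = 0$; since for $c\neq 0$ the origin is not a root of $\hat f - c$ (because $\hat f(0) = 0$), the factor $z^s$ contributes exactly $s$ to the zero count there, and the $(s,p)$-valence of $f$ forces $\hat f - c$ to have at most $p - s = m$ zeros in $D_R$. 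The value $c = 0$ is handled the same way, so $\hat f$ is $m$-valent in $D_R$. It then remains to apply the standard theorem that an $m$-valent function in $D_R$ obeys $(m,R,S(k))$-Taylor domination with $S(k) = (A_m k/m)^{2m}$ — the argument-principle/Jensen estimate underlying the Bieberbach--Hayman coefficient bounds, see~\cite{Ha} — which supplies exactly the asserted form of $S(k)$.

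For the converse, the plan is a Rouch\'e comparison of $f - P$ against a polynomial of degree $p = s+m$ built from the head of $\hat f$, using Taylor domination to control the tail. Again write $f(z) - P(z) = z^s\hat f(z) - Q(z)$ and split $\hat f = \hat f_m + \hat g$, where $\hat f_m(z) = \sum_{k=1}^m a_{s+k}z^k$ and $\hat g(z) = \sum_{k>m} a_{s+k}z^k$. The polynomial $H(z) = z^s\hat f_m(z) - Q(z)$ has degree at most $p = s+m$, hence at most $p$ zeros, while the $(m,R,S(k))$-domination gives, on $|z| = \rho$ with $\rho\le R' < R$, the bound $|z^s\hat g(z)| \le \rho^s\bigl(\max_{1\le i\le m}|a_{s+i}|R^i\bigr)\sum_{k>m}S(k)(\rho/R)^k$, whose right-hand side is small because $S$ is subexponential and $\rho/R < 1$. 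Following the mechanism of Example~\ref{exa} (Lemma~3.3 of~\cite{Y1}), one selects a radius $\rho$ in a prescribed range on which $|H|$ is bounded below in terms of its dominant coefficient; once $|z^s\hat g| < |H|$ holds on $S_\rho$, Rouch\'e's principle shows that $z^s\hat f - Q$ has the same number of zeros in $D_\rho$ as $H$, namely at most $p$.

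The qualitative dependence of $p$ on $R'/R$ then comes from quantifying this comparison. When $R'/R$ is small the tail factor $\sum_{k>m}S(k)(\rho/R)^k$ is negligible for every admissible $\rho$, the comparison succeeds with a head of length $m$, and $p = s+m$. As $R'/R\to 1$ the tail can no longer be dominated by the first $m$ terms; one then absorbs more of $\hat f$ into the head, replacing $\hat f_m$ by a longer partial sum $\sum_{k=1}^{M}a_{s+k}z^k$ and raising the degree of $H$ to $s+M$, which drives $p\to\infty$. Making the cut-off $M = M(R'/R, S)$ and the radius $\rho$ explicit, so as to guarantee simultaneously the lower bound on $|H|$ and the strict tail inequality on $S_\rho$, is the main technical obstacle, and I expect it to be the delicate part of the argument; by contrast the forward direction rests on the clean reduction to $m$-valence together with the cited multivalent estimate.
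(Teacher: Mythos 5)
Your forward direction is essentially the paper's proof: the substitution $P = P_s + cz^s$ giving $f - P = z^s(\hat f - c)$, the conclusion that $\hat f$ is $m$-valent with $m = p-s$, and then the appeal to the classical coefficient bound for $m$-valent functions (Biernacki's theorem, Theorem~\ref{thm:bier}, quoted in the paper from~\cite{Bi}/\cite{Ha}), which yields exactly $S(k) = (A_mk/m)^{2m}$. Nothing to add there.

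For the converse the paper takes a shorter route: it observes that $f - P$ has the same Taylor coefficients as $\hat f$ from index $s+1$ on, hence inherits an $(s+m,R,S(k))$-Taylor domination, and then cites Theorem~2.3 of~\cite{BaYo}, which bounds the number of zeros in $D_{R'}$ of any function with such a domination property and delivers the stated behavior of $p(s+m,S(k),R'/R)$. You instead try to reprove that zero-counting statement from scratch by a Rouch\'e comparison of $z^s\hat f - Q$ with the head polynomial $H = z^s\hat f_m - Q$. The plan is morally the right one (it is essentially the mechanism behind the cited theorem), but as written it has a genuine gap at its core: the lower bound for $|H|$ on a circle. Lemma~3.3 of~\cite{Y1}, as used in Example~\ref{exa}, gives a lower bound in terms of the \emph{leading} coefficient of a normalized polynomial, whereas the leading coefficient of your $H$ is $a_{s+m}$, which may be zero or arbitrarily small compared with the quantity $\max_{1\le i\le m}|a_{s+i}|R^i$ that controls your tail estimate. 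What you actually need is a Tur\'an/Chebyshev-type lemma giving a lower bound for $|H|$ on some circle $\rho\in[R',R'']$ in terms of $\max_{1\le i\le m}|a_{s+i}|\rho^{s+i}$ --- exploiting that the coefficients of $z^{s+1},\dots,z^{s+m}$ in $H$ are exactly $a_{s+1},\dots,a_{s+m}$ and cannot be cancelled by the adversarial $Q$ of degree $\le s$. You flag this, together with the choice of the cut-off $M$ when $R'/R\to 1$, as ``the delicate part,'' and indeed it is precisely the content of the result the paper invokes; without it (or an equivalent substitute) the converse is not established.
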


\begin{proof}
First observe that if $f$ is $(s,p)$-valent in $D_R$, then $\hat f$ is $m$-valent there, with $m = p-s$. Indeed, put $P(z) = \sum_{k = 0}^{s}a_k z^k + cz^s$, with any $c\in {\mathbb C}$. Then, $f(z)-P(z) = z^s(\hat f(z)-c)$ may have at most $p$ zeroes. Consequently, $\hat f(z)-c$ may have at most $m$ zeroes in $D_R$, and thus $\hat f$ is $m$-valent there. Now we apply the following classic theorem:

\begin{thm} [Biernacki, 1936,~\cite{Bi}] \label{thm:bier}
If $f$ is $m$-valent in the disk $D_{R}$ of radius $R$ centered at $0\in{\mathbb{C}}$ then
$$
|a_k| R^k\le\left(\frac{A_m k}{m}\right)^{2m}\max_{i = 1,\dots,m}|a_{i}|R^{i} ~, \quad k\ge m+1,
$$
where $A_m$ is a constant depending only on $m$.
\end{thm}

In our situation, Theorem~\ref{thm:bier} claims that the function $\hat f$ which is $m$-valent in $D_{R}$, possesses an $(m,R,\left(\frac{A_m k}{m}\right)^{2m})$-Taylor domination property. This completes the proof in one direction.

\smallskip

In the opposite direction, for polynomial $P(z)$ of degree $s$ the function $f-P$ has the same Taylor coefficients as $\hat f$, starting with the index $k = s + 1$. Consequently, if $\hat f$ possesses an $(m,R,S(k))$-Taylor domination, then $f-P$ possesses an $(s + m,R,S(k))$-Taylor domination. Now a straightforward application of Theorem~2.3 of~\cite{BaYo} provides the required bound on the number of zeroes of $f-P$ in the disk $D_R$.
\end{proof}

A typical situation for natural classes of $(s,p)$-valent functions is that they are $(s,p)$-valent for any $s$ with a certain $p = p(s)$ which depends on $s$. However, it is important to notice that essentially $\it any$ analytic function possesses this property, with some $p(s)$.

\begin{prp} \label{Any.SP}
Let $f(z)$ be an analytic function in an open neighbourhood $U$ of the closed disk $D_R$. Assume that $f$ is not a polynomial. Then, the function $f$ is $(s,p(s))$-valent for any $s$ with a certain sequence $p(s)$.
\end{prp}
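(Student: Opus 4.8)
The plan is to deduce the statement from the converse half of Theorem~\ref{thm:SP.TD}: it suffices to produce, for each fixed $s$, a single index $m$ and a subexponential sequence $S(k)$ for which the lower $s$-truncation $\hat f(z) = \sum_{k=1}^\infty a_{s+k} z^k$ enjoys an $(m,\tilde R,S(k))$-Taylor domination on a disk of radius $\tilde R$ slightly larger than $R$. Since $U$ is an open neighbourhood of the closed disk $\overline{D_R}$, I would first choose $\tilde R$ with $R < \tilde R$ and $\overline{D_{\tilde R}}\subset U$. On this disk $f$ is analytic and bounded, so the Cauchy estimates supply a uniform bound $|a_k|\,\tilde R^k \le M$ for all $k$, where $M = \max_{|z| = \tilde R}|f(z)|$.

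The hypothesis that $f$ is not a polynomial now does the essential work: it guarantees that $\hat f\not\equiv 0$ for every $s$, so there is a smallest index $m = m(s)\ge 1$ with $a_{s+m}\ne 0$. The index-$0$ coefficient of $\hat f$ vanishes and, by minimality of $m$, so do all its coefficients of index $<m$; hence $\max_{0\le i\le m}$ of the (moduli of the, scaled) Taylor coefficients of $\hat f$ equals $|a_{s+m}|\,\tilde R^m =: c > 0$. Combining this lower bound with the Cauchy upper bound $|a_{s+k}|\,\tilde R^k \le M\,\tilde R^{-s}$, valid for every $k$, one obtains for all $k\ge m+1$
\[
|a_{s+k}|\,\tilde R^k \;\le\; \frac{M\,\tilde R^{-s}}{c}\; c \;\le\; S\,\max_{1\le i\le m}|a_{s+i}|\,\tilde R^{i}, \qquad S := \frac{M}{c\,\tilde R^{s}}.
\]
Thus $\hat f$ possesses an $(m,\tilde R,S)$-Taylor domination with the \emph{constant} sequence $S(k)\equiv S$, which is trivially of subexponential growth.

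Finally I would invoke the converse direction of Theorem~\ref{thm:SP.TD} with the radius $\tilde R$ (where the domination holds) and $R < \tilde R$ (where valence is sought): it yields that $f$ is $(s,p)$-valent in $D_R$ with a finite $p = p(s+m,\,S,\,R/\tilde R)$, and setting $p(s) := p$ finishes the argument. Note that $m$, and therefore $p(s)$, is permitted to depend on $s$, which is all the statement asks for. The only delicate point is the step securing the positive lower bound $c$ on the leading truncated coefficient; this is precisely where non-polynomiality is indispensable, since for a polynomial $f$ of degree $\le s$ the truncation $\hat f$ vanishes identically, no such $p(s)$ exists, and indeed the equation $f = f$ already has infinitely many solutions.
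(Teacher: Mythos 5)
Your proof is correct, and its overall strategy coincides with the paper's: both arguments reduce the claim to the converse half of Theorem~\ref{thm:SP.TD} by establishing a Taylor domination property for the lower $s$-truncation $\hat f$ on a disk strictly larger than $D_R$. The difference lies in how that domination is obtained. The paper simply cites Proposition~1.1 of~\cite{BaYo}, applied to $\hat f$ at the radius of convergence $\hat R$, to produce an $(m,\hat R,S(k))$-domination for some $m$ and $S(k)$. You instead derive the domination from scratch: Cauchy estimates on a circle $|z|=\tilde R$ with $\overline{D_{\tilde R}}\subset U$ give the uniform upper bound $|a_{s+k}|\tilde R^{k}\le M\tilde R^{-s}$, and non-polynomiality supplies a first nonvanishing truncated coefficient $a_{s+m}$, whose modulus furnishes the positive denominator $c$ needed to convert the Cauchy bound into a domination with the constant sequence $S(k)\equiv M/(c\,\tilde R^{s})$. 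This buys you two things: the argument is self-contained (no appeal to the external proposition), and it sidesteps a small imprecision in the paper, namely that Definition~\ref{def:T.D} requires the domination radius to be strictly less than the radius of convergence, whereas the paper's proof nominally asserts domination at $\hat R$ itself; your choice of $\tilde R$ strictly inside $U$ avoids this. What you lose is only that the citation to~\cite{BaYo} presumably yields more quantitative control of $m$ and $S(k)$ in terms of the lacunary structure of the coefficients, which the paper alludes to right after the proposition; for the qualitative statement at hand your version is fully adequate.
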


\begin{proof}
Let $f$ be given by its Taylor series $f(z) = \sum_{k = 0}^\infty a_k z^k$. By assumptions, the radius of convergence $\hat R$ of this series satisfies $\hat R > R$. Since $f$ is not a polynomial, for any given $s$ there is the index $k(s)>s$ such that $a_{k(s)}\ne 0$. We apply now Proposition~1.1 of~\cite{BaYo} to the lower truncated series $\hat f(z) = \sum_{k = 1}^\infty a_{s + k}z^k$. Thus, we obtain, an $(m,\hat R, S(k))$-Taylor domination for $\hat f$, for certain $m$ and $S(k)$. Now, the second part of Theorem~\ref{thm:SP.TD} provides the required $(s,p(s))$-valency for $f$ in the smaller disk $D_R$, with $p(s) = p(s + m,S(k),{R}/{\hat R})$.
\end{proof}

More accurate estimates of $p(s)$ can be provided via the lacunary structure of the Taylor coefficients of $f$. Consequently, $(s,p)$-valency becomes really interesting only for those {\it classes} of analytic functions $f$ where we can specify the parameters in an explicit and uniform way. The following theorem provides still very general, but important such class.

\begin{thm} \label{thm:rec}
Let $f(z) = \sum_{k = 0}^\infty a_k z^k$ be $(s,s + m)$-valent in $D_R$ for any $s$. Then, the Taylor coefficients $a_k$ of $f$ satisfy a linear homogeneous non-stationary recurrence relation
\begin{align} \label{eq:Main.Rec}
a_k = \sum_{j = 1}^m c_j(k) a_{k-j}
\end{align}
with uniformly bounded (in $k$) coefficients $c_j(k)$ satisfying $|c_j(k)| \le C\rho^j$, with $C = e^2A_m^{2m},\rho = R^{-1}$, where $A_m$ is the constant in the Biernacki's Theorem~\ref{thm:bier}.

\smallskip

Conversely, if the Taylor coefficients $a_k$ of $f$ satisfy recurrence relation~\eqref{eq:Main.Rec}, with the coefficients $c_j(k)$, bounded for certain $K,\rho >0$ and for any $k$ as $|c_j(k)| \le K\rho^j$, $j = 1,\dots,m$, then for any $s$, $f$ is $(s,s + m)$-valent in a disk $D_R$, with $R = {1\over {2^{3m + 1}(2K + 2)\rho}}$.
\end{thm}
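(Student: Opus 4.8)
The plan is to prove the two implications separately. For the direct implication I would extract the bounded recurrence from Biernacki's Theorem~\ref{thm:bier} applied to a \emph{sliding} truncation of $f$; for the converse I would run the recurrence forward to produce a geometrically decaying Taylor domination for every lower truncation, and then feed it into the zero-counting result already behind Theorem~\ref{thm:SP.TD}.

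\emph{Direct implication.} Fix an index $k\ge m+1$ and apply the hypothesis with the specific truncation level $s=k-m-1\ge 0$. As in the proof of Theorem~\ref{thm:SP.TD}, $(s,s+m)$-valency of $f$ forces the lower $s$-truncation $\hat f$, whose coefficients are $\hat a_j=a_{s+j}$, to be $m$-valent in $D_R$. Biernacki's Theorem~\ref{thm:bier} applied to $\hat f$ at the index $j=m+1$ then reads
\[
|a_k|R^{m+1}\le\left(\frac{A_m(m+1)}{m}\right)^{2m}\max_{i=1,\dots,m}|a_{s+i}|R^{i}.
\]
Since $s+i$ runs over $k-m,\dots,k-1$ as $i$ runs over $1,\dots,m$, the right-hand side is a comparison with $a_{k-1},\dots,a_{k-m}$; dividing by $R^{m+1}$ gives $|a_k|\le C\max_{j=1,\dots,m}|a_{k-j}|\rho^{j}$ with $\rho=R^{-1}$ and $C=e^{2}A_m^{2m}$, using $\left(1+\frac1m\right)^{2m}\le e^{2}$. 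I would then manufacture the recurrence by a one-term choice: if the maximum vanishes then $a_k=0$ and all $c_j(k)=0$; otherwise pick $j_0$ realizing the maximum (so $a_{k-j_0}\ne 0$), set $c_{j_0}(k)=a_k/a_{k-j_0}$ and the remaining $c_j(k)=0$, so that \eqref{eq:Main.Rec} holds with $|c_{j_0}(k)|=|a_k|/|a_{k-j_0}|\le C\rho^{j_0}$.

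\emph{Converse.} Now the coefficients of $f$ satisfy \eqref{eq:Main.Rec} with $|c_j(k)|\le K\rho^{j}$; put $b_k=|a_k|R^{k}$ and $q=\rho R$. For a target level $s$, the recurrence is self-contained on the tail indices $\ge s+1$ once $k\ge s+m+1$, giving $b_k\le K\sum_{j=1}^m q^{j}b_{k-j}$. The choice $R=\frac{1}{2^{3m+1}(2K+2)\rho}$ makes $q$ so small that a direct induction yields the geometric decay $b_k\le 2^{-(k-s-m)}\max_{s<i\le s+m}b_i$ for $k>s+m$ (the step only needs $K\sum_{j\ge1}(2q)^{j}\le 1$, which holds with room to spare). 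For any polynomial $P$ of degree $s$ the function $f-P$ has the same coefficients as $f$ from index $s+1$ on, so this is exactly an $(s+m,R,S(k))$-Taylor domination for $f-P$ with $S(k)=2^{-(k-s-m)}$. Applying the zero-counting result of~\cite{BaYo} (Theorem~2.3, already used in the proof of Theorem~\ref{thm:SP.TD}) to $f-P$ then bounds its zeros in $D_R$ by $s+m$, which is the asserted $(s,s+m)$-valency.

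\emph{Main obstacle.} The delicate point is this last step: converting Taylor domination into a bound of \emph{exactly} $s+m$ zeros holding on the \emph{full} disk $D_R$ and, crucially, \emph{uniformly in} $s$. A naive Rouch\'e comparison of the degree-$(s+m)$ Taylor polynomial of $f-P$ against its tail fails, because the low coefficients $a_k-p_k$ for $k\le s$ are arbitrary and the polynomial part may be small on parts of $\{|z|=R\}$. The radius $R$ can nonetheless be taken independent of $s$ because the geometric ratio produced by the recurrence depends only on the order $m$ and on $q=\rho R$, not on $s$; the explicit factor $2^{3m+1}$ is precisely the margin needed so that the \cite{BaYo} criterion closes with this fixed ratio regardless of $s$. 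Making this quantitative, rather than merely invoking ``$R'/R$ sufficiently small'' as in Theorem~\ref{thm:SP.TD}, is the heart of the matter.
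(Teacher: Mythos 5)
Your direct implication is correct and is essentially the paper's own argument: fixing $s=k-m-1$, passing to the $m$-valent lower truncation $\hat f$, applying Biernacki's Theorem~\ref{thm:bier} at the index $m+1$, and realizing the resulting inequality $|a_k|\le C\max_{j}|a_{k-j}|\rho^{j}$ as a one-term recurrence is exactly what the paper does; it merely writes ``we can choose the coefficients'' where you make the choice explicit.

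The converse, however, has a genuine gap, and it is precisely the one you flag yourself in your ``main obstacle'' paragraph. Your induction giving $b_k\le 2^{-(k-s-m)}\max_{s<i\le s+m}b_i$ is fine, but the step from this Taylor domination to a count of at most $s+m$ zeros of $f-P$ \emph{in the full disk $D_R$} is never supplied: Theorem~2.3 of \cite{BaYo}, as the paper itself uses it in Theorem~\ref{thm:SP.TD}, only yields $(s,s+m)$-valency in a strictly smaller disk $D_{R'}$ with $R'/R$ ``sufficiently small'', and quantifying that smallness uniformly in $s$ is exactly what is missing from your write-up. The paper closes this by invoking Theorem~4.1 of \cite{BaYo} together with Lemma~2.2.3 of \cite{Roy.Yom} (the Bernstein-class zero-counting lemma), which performs the disk-shrinking quantitatively. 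Note also that your choice to absorb the entire factor $2^{3m+1}(2K+2)$ into the radius at which you run the induction leaves you no room to shrink: the natural splitting of the constant is that the induction already closes at $R_1=\frac{1}{(2K+2)\rho}$ (there $2\rho R_1=\frac{1}{K+1}$ and $K\sum_{j\ge 1}(K+1)^{-j}=1$, giving decay ratio $1/2$), while the remaining factor $2^{-(3m+1)}=R/R_1$ is the shrinkage that the zero-counting lemma demands. As written, your argument asserts the conclusion at radius $R$ without a lemma that delivers it there.
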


\begin{proof}
Let us fix $s\ge 0$. As in the proof of Theorem~\ref{thm:SP.TD}, we notice that if $f$ is $(s,s + m)$-valent in $D_R$, then its lower $s$-truncated series $\hat f$ is $m$-valent there. By Biernacki's Theorem~\ref{thm:bier} we conclude that
$$
|a_{s + m + 1}|R^{m + 1} \le \left( {{A_m(m + 1)}\over m} \right)^{2m}\max_{i = 1,\dots,m}|a_{s + i}|R^{i}\le C\max_{i = 1,\dots,m}|a_{s + i}|R^{i},
$$
with $C = e^2A_m^{2m}$. Putting $k = s + m + 1$, and $\rho = R^{-1}$ we can rewrite this as
$$
|a_k| \le C\max_{j = 1,\dots,m}|a_{k-j}|\rho^{j}.
$$

Hence we can chose the coefficients $c_j(k)$, $k = s + m + 1$, in such a way that $a_k = \sum_{j = 1}^m c_j(k) a_{k-j}$, and $|c_j(k)| \le C\rho^j$. Notice that the bound on the recursion coefficients is sharp, and take $f(z) = [1-({z\over R})^m]^{-1}$ (in this case, as well as for other lacunary series with the gap $m$, the coefficients $c_j(k)$ are defined uniquely). This completes one direction of the proof.

\smallskip

In the opposite direction, the result follows directly from Theorem~4.1 of~\cite{BaYo}, and Lemma~2.2.3 of~\cite{Roy.Yom}, with $R = {1\over {2^{3m + 1}(2K + 2)\rho}}$.
\end{proof}

\section{Distortion theorem} \label{Distortion theorem}

In this section we prove a distortion-type theorem for $(s,p)$-valent functions which shows that the behavior of these functions is controlled by the behavior of a polynomial with the same zeroes.

\smallskip

First, let us recall the following theorem for $p$-valent functions, which is our main tool in proof.

\begin{thm} \cite[Theorem~5.1]{Ha} \label{thm:h}
Let $g(z) = a_0 + a_1z + \dots$ be a regular non-vanishing $p$-valent function in $D_1$. Then, for any $z\in D_1$
$$
\left( \frac{1-|z|}{1 + |z|} \right)^{2p}\le |g(z)/a_0|\le \left(\frac{1 + |z|}{1-|z|} \right)^{2p}.
$$
\end{thm}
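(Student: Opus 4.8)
The plan is to reduce both pointwise inequalities to a single bound on the logarithmic derivative, namely $|g'(z)/g(z)|\le 4p/(1-|z|^2)$ for every non-vanishing $p$-valent function, and then to integrate this bound radially. Since $g$ is non-vanishing on the simply connected disk $D_1$, the branch $h=\log(g/a_0)$ is well defined and single valued with $h(0)=0$, and $\log|g(z)/a_0|=\operatorname{Re}h(z)=\operatorname{Re}\int_0^z(g'/g)\,d\zeta$. Granting the logarithmic-derivative bound, I would parametrize the segment from $0$ to $z=re^{i\theta}$ and estimate
\[
\bigl|\log|g(z)/a_0|\bigr|\le\int_0^r\Bigl|\tfrac{g'}{g}(te^{i\theta})\Bigr|\,dt\le\int_0^r\frac{4p}{1-t^2}\,dt=2p\log\frac{1+r}{1-r},
\]
which, after exponentiation, is exactly the asserted pair of inequalities.

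To obtain the logarithmic-derivative bound I would exploit its conformal invariance, reducing it to a coefficient estimate at the center. Fixing $z_0\in D_1$ and composing with the disk automorphism $\psi(w)=(w+z_0)/(1+\bar z_0 w)$, the function $G=g\circ\psi$ is again non-vanishing and $p$-valent (valence is preserved under precomposition with an automorphism of $D_1$), and since $\psi'(0)=1-|z_0|^2$ one computes $(g'/g)(z_0)\,(1-|z_0|^2)=(G'/G)(0)$. Thus everything comes down to the single coefficient inequality $|G'(0)/G(0)|\le 4p$, that is, $|a_1|\le 4p\,|a_0|$ for an arbitrary non-vanishing $p$-valent $G=a_0+a_1w+\cdots$.

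For this coefficient inequality I would pass to $\Psi=(G/a_0)^{1/2p}$, taking the branch with $\Psi(0)=1$, so that $\Psi'(0)=a_1/(2p\,a_0)$. The point is that $p$-valence forces $\operatorname{Re}\Psi>0$: the image of $G/a_0$ winds around the origin through a total angle of at most $2p\pi$, since each ray issuing from $0$ is met with bounded multiplicity governed by the valence, so after extracting the $2p$-th root the argument is confined to $(-\pi/2,\pi/2)$. Once $\Psi$ maps $D_1$ into the right half-plane with $\Psi(0)=1$, the Carath\'eodory inequality—obtained by applying the Schwarz lemma to $(\Psi-1)/(\Psi+1)$—yields $|\Psi'(0)|\le 2\operatorname{Re}\Psi(0)=2$, and hence $|a_1|\le 4p\,|a_0|$.

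The main obstacle is precisely the claim $\operatorname{Re}\Psi>0$: this is the only step where $p$-valence, rather than mere non-vanishing, is used, and it is what pins down the sharp constant $4p$. Making it rigorous requires an argument-principle count turning the valence bound on preimages of rays into a bound of $2p\pi$ on the oscillation of $\arg(g/a_0)$, together with the fact that $\arg(g/a_0)=\operatorname{Im}h$ is harmonic with mean value $0$ on every circle, which forces the center to be balanced within this angular range and justifies the symmetric normalization $\arg(g/a_0)\in(-p\pi,p\pi)$. The extremal function $g(z)=\bigl(\tfrac{1+z}{1-z}\bigr)^{2p}$, for which $\Psi(z)=\tfrac{1+z}{1-z}$ maps $D_1$ conformally onto the right half-plane and $|a_1|=4p\,|a_0|$, confirms that both the constant and the half-plane geometry are sharp.
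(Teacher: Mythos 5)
The paper does not prove this statement at all --- it is quoted from Hayman's book \cite{Ha}, where it is established via the machinery of circumferentially mean $p$-valent functions. So the only question is whether your argument stands on its own, and it does not: the decisive step is false. Your outer reductions are fine --- the theorem would indeed follow from $|g'(z)/g(z)|\le 4p/(1-|z|^2)$ by radial integration of $\log(g/a_0)$, and by M\"obius invariance that bound is equivalent to the coefficient inequality $|G'(0)|\le 4p|G(0)|$ for every non-vanishing $p$-valent $G$. The gap is the claim that $p$-valence forces $\Psi=(G/a_0)^{1/2p}$ to have positive real part, i.e.\ that the continuous branch of $\arg(G/a_0)$ stays in $(-p\pi,p\pi)$. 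This fails already for $p=1$: let $L=\{x+iy:\ |y-10x|<0.1\}$ be a thin strip around a steep line; $\exp$ is injective on $L$ (two points of $L$ with equal real parts have imaginary parts differing by less than $2\pi$), so $\Omega=\exp(L)$ is a simply connected spiral domain omitting $0$, and a Riemann map $G:D_1\to\Omega$ is univalent and non-vanishing while $\arg G$ ranges over an interval of arbitrarily large length as the image winds around the origin. Your heuristic that ``each ray issuing from $0$ is met with bounded multiplicity governed by the valence'' is exactly where this breaks: valence bounds the number of preimages of each individual \emph{point}, but a ray can cross the image in arbitrarily many disjoint arcs with every point still covered at most $p$ times. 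You correctly identified this as the main obstacle, but it is not a technicality to be made rigorous --- it is a false statement, and with it the Carath\'eodory--Schwarz step collapses.

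The coefficient inequality $|a_1|\le 4p|a_0|$ is nevertheless true, but the known route to it is essentially Hayman's: for a non-vanishing circumferentially mean $p$-valent $g$, the root $h=g^{1/2p}$ satisfies $n_g(\omega^{2p})=\sum_{k=0}^{2p-1}n_h(\omega e^{\pi ik/p})$, so averaging the counting function over circles shows $h$ is mean $\tfrac12$-valent; the growth of mean $q$-valent functions is then controlled by a length--area argument, not by a half-plane subordination. To complete your proof you would need to import that lemma (or an equivalent); the positive-real-part shortcut cannot be repaired. Note also that the extremal function $\bigl(\tfrac{1+z}{1-z}\bigr)^{2p}$ only confirms sharpness of the constant, not the claimed half-plane geometry for arbitrary $G$.
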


Now, we are at the point to formulate a distortion-type theorem for $(s,p)$-valent functions.

\begin{thm} [Distortion theorem] \label{thm:disto}
Let $f$ be an $(s,p)$-valent function in $D_1$ having there exactly $s$ zeroes $x_1,\dots, x_s$ (always assumed to be counted according
to multiplicity). Define a polynomial
$$
P(x) = A\prod_{j = 1}^s(x-x_j),
$$
where the coefficient $A$ is chosen such that the constant term in the Taylor series for $f(x)/P(x)$ is equal to $1$. Then, for any $x\in D_1$
$$
\left( \frac{1-|x|}{1 + |x|} \right)^{2p}\le |f(x)/P(x)|\le \left( \frac{1 + |x|}{1-|x|} \right)^{2p}.
$$
\end{thm}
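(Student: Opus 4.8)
The plan is to reduce the statement to the non-vanishing $p$-valent distortion estimate of Theorem~\ref{thm:h} by passing to the quotient $g(x) = f(x)/P(x)$. First I would note that, since $f$ has in $D_1$ exactly the zeroes $x_1,\dots,x_s$ counted with multiplicity, and $P$ is built from precisely these zeroes with the same multiplicities, the quotient $g$ extends to a function that is regular and non-vanishing throughout $D_1$: each $x_j$ is a removable singularity at which $g$ takes a finite nonzero value, and there are no other zeroes or poles. The normalization of the constant $A$ forces the constant term of the Taylor series of $g$ at $0$ to equal $1$, so in the notation of Theorem~\ref{thm:h} we are in the situation $a_0 = 1$.

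The core of the argument is to show that $g$ is $p$-valent in $D_1$. Fix $c\in\C$. Away from the points $x_j$ the equation $g(x) = c$ is equivalent to $f(x) = cP(x)$, and $cP$ is a polynomial of degree at most $s$; hence, by the $(s,p)$-valency of $f$, the equation $f = cP$ has at most $p$ solutions in $D_1$. To count the solutions of $g = c$ with multiplicity I would compare the orders of vanishing of $g-c$ and of $f-cP$ at each point. Writing $f(x) = (x-x_j)^{\nu_j}h_j(x)$ and $P(x) = (x-x_j)^{\nu_j}q_j(x)$ near $x_j$, with $h_j(x_j),q_j(x_j)\ne 0$, one gets $g-c = (h_j-cq_j)/q_j$ near $x_j$, so the order of $x_j$ as a zero of $g-c$ equals its order as a zero of $f-cP$ diminished by $\nu_j$; at all other points the two orders coincide. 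Consequently the number of solutions of $g = c$ in $D_1$ is at most the number of solutions of $f = cP$, which is at most $p$. Thus $g$ is $p$-valent (in fact, subtracting the $s$ shared zeroes, it is $(p-s)$-valent).

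Finally I would apply Theorem~\ref{thm:h} to the regular, non-vanishing, $p$-valent function $g$ with $a_0 = 1$. Since $|g(x)/a_0| = |f(x)/P(x)|$, this yields
$$
\left(\frac{1-|x|}{1+|x|}\right)^{2p}\le|f(x)/P(x)|\le\left(\frac{1+|x|}{1-|x|}\right)^{2p}
$$
for every $x\in D_1$, which is exactly the asserted estimate. The step I expect to be the main obstacle is the valence count: one must track multiplicities at the shared zeroes $x_j$ with care, since for the finitely many values $c = g(x_j)$ the order of $f-cP$ at $x_j$ jumps, and one has to verify that this jump is exactly absorbed by the corresponding jump in the order of $g-c$, so that the solution count for $g = c$ never exceeds that for $f = cP$. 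An argument-principle bookkeeping — using that $g-c = (f-cP)/P$ is holomorphic while $P$ contributes exactly $s$ zeroes in $D_1$ — makes this transparent and even gives the sharper $(p-s)$-valency.
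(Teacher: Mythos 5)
Your proposal is correct and follows essentially the same route as the paper: pass to $g = f/P$, observe that $g$ is regular and non-vanishing in $D_1$, reduce $g = c$ to $f = cP$ with $\deg(cP)\le s$ to get $p$-valency, and invoke Theorem~\ref{thm:h}. The only difference is that you spell out the multiplicity bookkeeping at the shared zeroes $x_j$, which the paper leaves implicit; that detail is accurate and only strengthens the argument.
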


\begin{proof}
The function $g(x) = f(x)/P(x)$ is regular in $D_1$ and does not vanish there. Moreover, $g$ is $p$-valent in $D_1$. Indeed, the equation $g(x) = c$ is equivalent to $f(x) = cP(x)$ so it has at most $p$ solutions by the definition of $(s,p)$-valent functions. Now, apply Theorem~\ref{thm:h} to the function $g$.
\end{proof}

It is not clear whether the requirement for $f$ to be $(s,p)$-valent is really necessary in this theorem. The ratio $g(x) = {\frac{f(x)}{P(x)}}$ certainly may be not $p$-valent for $f$ being just $p$-valent, but not $(s,p)$-valent. Indeed, take $f(x) = x^p + x^N$ as in Example~\ref{exa}. By this example $f$ is $p$-valent in $D_{1/ 3}$ and it has a root of multiplicity $p$ at zero. So $g(x) = {{f(x)}/ {x^p}} = 1 + x^{N-p}$ and the equation $g(x) = c$ has $N-p$ solutions in $D_{1/ 3}$ for $c$ sufficiently close to $1$. So $g$ is not $p$-valent there.

\section{Complex polynomials} \label{CP}

The distortion theorem~\ref{thm:disto}, proved in the previous section, allows us easily to extend deep properties from polynomials to $(s,p)$-valent functions, just by comparing them with polynomials having the same zeros. In this section we make a detour and investigate one specific problem for complex polynomials, which is interesting in its own right: a Remez-type inequality for complex polynomial (compare~\cite{Re,Y3}). Denote by
$$
V_{\rho}(g) = \{z : |g(z)|\le\rho\}
$$
the $\rho$ sub-level set of a function $g$. For polynomials in one complex variable a result similar to the Remez inequality is provided by the classic Cartan (or Cartan-Boutroux) lemma (see, for example, \cite{Gor} and references therein):

\begin{lem} [Cartan's lemma~\cite{Ca}, in form of~\cite{Gor}]
Let $\alpha,\e>0$, and let $P(z)$ be a monic polynomial of degree $d$. Then
$$
V_{\e^d}(P)\subset\cup_{j = 1}^p D_{r_j},
$$
where $p\le d$, and $D_{r_1},\dots,D_{r_p}$ are balls with radii $r_j>0$ satisfying $\sum_{j = 1}^p r_j^\alpha\le e (2\e)^\alpha$.
\end{lem}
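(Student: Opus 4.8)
The plan is to prove the Cartan lemma by a greedy covering construction on the zero set of $P$, grouping zeros into clusters and bounding the size of each cluster's covering disk. Let me sketch how I would carry this out.

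First I would write $P(z)=\prod_{j=1}^{d}(z-z_j)$ using the zeros $z_1,\dots,z_d$ (with multiplicity), so that on the sublevel set $V_{\e^d}(P)$ we have $\prod_{j}|z-z_j|\le\e^d$. The heuristic is that a point $z$ lies in $V_{\e^d}(P)$ only if it is close, on average, to the zeros, which suggests covering $V_{\e^d}(P)$ by disks centered near clusters of the $z_j$. My approach would be the standard iterative clustering: I would find a maximal collection of disks, each containing as many zeros as possible relative to its radius, in a way that controls the total $\alpha$-content $\sum r_j^\alpha$.

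The key steps, in order, are as follows. First, I would establish the combinatorial covering: for each $k=1,\dots,d$, consider disks of radius proportional to $\e k^{1/\alpha}$ (or similar) and show that if a disk of this radius contains fewer than $k$ zeros, then $|P(z)|>\e^d$ throughout it, placing it outside $V_{\e^d}(P)$. Concretely, I would argue that any point $z$ with $|P(z)|\le\e^d$ must have a cluster of zeros within a controlled distance; this is where the geometric-mean estimate $\bigl(\prod_j|z-z_j|\bigr)^{1/d}\le\e$ is converted into a covering statement. Second, I would run a greedy selection: repeatedly pick a disk of minimal radius enclosing a given number of as-yet-uncovered zeros, remove those zeros, and iterate; this produces at most $p\le d$ disks since each consumes at least one zero. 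Third, I would sum the radii: the construction guarantees each disk $D_{r_j}$ encloses a number of zeros comparable to $(r_j/\e)^\alpha$, so summing over the at-most-$d$ zeros total gives $\sum_j r_j^\alpha\le e(2\e)^\alpha$, where the factor $e$ and the $2$ come from the overlap slack in the greedy step and from passing from ``distance to a cluster'' to ``radius of the covering disk.''

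The hard part, I expect, will be the bookkeeping that simultaneously controls \emph{both} the number of disks (so that $p\le d$) \emph{and} the weighted radius sum $\sum_j r_j^\alpha$ with the sharp constant $e(2\e)^\alpha$; these two requirements pull against each other, since minimizing the number of disks tends to inflate their radii. The clean way around this is the Cartan-Boutroux inductive argument: for a fixed threshold $t$, call a point ``$t$-crowded'' if some disk of radius $t$ around it contains at least $(t/\e)^\alpha$ zeros, then cover the crowded set by a bounded-overlap family and induct on the number of remaining zeros, tracking the constant carefully through each merge. Rather than reproduce this classical estimate in full, I would cite the standard reference~\cite{Gor} for the sharp constant and focus the exposition on the conversion from $|P(z)|\le\e^d$ to the crowding condition, since that is the only step specific to our normalization.
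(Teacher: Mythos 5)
The paper does not prove this lemma at all: it is quoted as a classical result of Cartan, in the form given by Gorin~\cite{Gor}, and used as a black box. Your sketch is the standard Cartan--Boutroux greedy argument, and in outline it is the right proof: pick the largest $\lambda$ for which some disk of radius $\e e^{1/\alpha}(\lambda/d)^{1/\alpha}$ contains at least $\lambda$ zeros, remove those zeros, iterate (so $p\le d$ and $\sum r_j^\alpha\le e(2\e)^\alpha$ after doubling the radii), then show any point outside the doubled disks has $|P(z)|>\e^d$. Since you ultimately defer the constant-tracking to~\cite{Gor}, your proposal ends up in essentially the same place as the paper.

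One step of your sketch is misstated, though, and would fail if taken literally: it is not true that a disk containing fewer than $k$ zeros has $|P(z)|>\e^d$ \emph{throughout} it. The correct mechanism is pointwise: for a point $z$ not covered by the doubled disks, maximality of the greedy choice forces the disk of radius $2\e e^{1/\alpha}(k/d)^{1/\alpha}$ about $z$ to contain fewer than $k$ zeros for \emph{every} $k=1,\dots,d$; hence the $k$-th nearest zero to $z$ is at distance at least $\e e^{1/\alpha}(k/d)^{1/\alpha}$, and multiplying over $k$ and using $d!\ge (d/e)^d$ gives $|P(z)|\ge \e^d$. Your next sentence (the contrapositive about clusters of zeros near points of $V_{\e^d}(P)$) is the correct formulation, so this is a slip of phrasing rather than of idea, but in a written proof the quantifier ``for every $k$ simultaneously, at the same center $z$'' is exactly the point that makes the product estimate work, and it should be stated that way.
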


In~\cites{Bru, Bru.Bru2, Zer2, Zer1} some generalizations of the Cartan-Boutroux lemma to plurisubharmonic functions have been obtained, which lead, in particular, to the bounds on the size of sub-level sets. In these lines in~\cite{Bru} some bounds for the covering number of sublevel sets of complex analytic functions have been obtained, similar to the results of~\cite{Y3} in the real case. Now, we shall derive from the Cartan lemma both the definition of the invariant $c_{d,\alpha}$ and the corresponding Remez inequality.

\begin{defin}
Let $Z\subset D_1$. The $(d,\alpha)$-Cartan measure of $Z$ is defined as
$$
c_{d,\alpha}(Z) = \min \left\{ \left( \sum_{j = 1}^p r_j^\alpha \right)^{1/\alpha} : \text{there is a cover of $Z$ by $p\le d$ balls with radii $r_j>0$} \right\}.
$$
\end{defin}

Note that the $\alpha$-dimensional Hausdorff content of $Z$ is defined in a similar way
$$
H_\alpha(Z) = \inf \left\{ \sum_j r_j^\alpha : \text{there is a cover of $Z$ by balls with radii $r_j>0$} \right\}.
$$

Thus, by the above definitions, we have $H^{1\over\alpha}_\alpha(Z)\le c_{d,\alpha}(Z)$.

\smallskip

For $\alpha = 1$ the $(d,1)$-Cartan measure $c_{d,\alpha}(Z)$ was introduced and used, under the name ``$d$-th diameter'', in~\cite{CP1,CP}. In
particular, Lemma~3.3 of~\cite{CP1} is, essentially, equivalent to the case $\alpha = 1$ of our Theorem~\ref{thm:C}. In Section
~\ref{geom.Cart} below we provide some initial geometric properties of $c_{d,\alpha}(Z)$ and show that a proper choice of $\alpha$ may improve
geometric sensitivity of this invariant.

\smallskip

Now we can state and proof our generalized Remez inequality for complex polynomials:

\begin{thm} \label{thm:C}
Let $P(z)$ be a polynomial of degree $d$. Let $Z\subset D_1$. Then, for any $\alpha > 0$
$$
\max_{D_1}|P(z)|\le \left( \frac{6e^{1/\alpha}}{c_{d,\alpha}(Z)} \right)^d\max_{Z}|P(z)| \le \left( \frac{6e}{H_\alpha(Z)} \right)^{d\over\alpha}\max_{Z}|P(z)|.
$$
\end{thm}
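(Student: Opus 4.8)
The plan is to prove the first inequality, from which the second follows immediately: substituting the content comparison $H_\alpha^{1/\alpha}(Z)\le c_{d,\alpha}(Z)$ into $\left(\frac{6e^{1/\alpha}}{c_{d,\alpha}(Z)}\right)^d$ only replaces $c_{d,\alpha}(Z)$ by the smaller quantity $H_\alpha^{1/\alpha}(Z)$, so I focus entirely on the bound with $c_{d,\alpha}(Z)$. Throughout I write $P(z)=A\prod_{j=1}^d(z-z_j)$ and work with the ratio $\max_{D_1}|P|/\max_Z|P|$, so that the leading coefficient $A$ cancels and I am really comparing the monic product at its best point on $D_1$ with its best point on $Z$.

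The first key step is a lower bound, which is the heart of the Cartan mechanism: for any monic polynomial $R$ of degree $n$ one has $\max_Z|R|\ge\left(c_{n,\alpha}(Z)/(2e^{1/\alpha})\right)^n$. To see this I set $m=\max_Z|R|$ and $\e=m^{1/n}$, so that $Z\subset V_{\e^n}(R)$; Cartan's lemma then covers $V_{\e^n}(R)$ by $p\le n$ balls whose radii satisfy $\left(\sum r_j^\alpha\right)^{1/\alpha}\le 2e^{1/\alpha}\e$. This is a legal competitor in the definition of $c_{n,\alpha}$, hence $c_{n,\alpha}(Z)\le 2e^{1/\alpha}m^{1/n}$, which rearranges to the claimed bound.

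The step I expect to be the genuine obstacle is that this lower bound, applied to the full monic part of $P$, is hopelessly lossy when $P$ has zeros far from $D_1$: there $\max_{D_1}$ of the monic product is exponentially large, while the Cartan lower bound does not detect this stretching (it cancels under rescaling, so no single application of Cartan can recover it). The remedy is to split the zeros at radius $2$, writing $P=A\,Q_{\mathrm{near}}Q_{\mathrm{far}}$, where $Q_{\mathrm{near}}$ collects the zeros in $\overline{D_2}$ (degree $d_{\mathrm{near}}$) and $Q_{\mathrm{far}}$ the rest. For the numerator I bound crudely $\max_{D_1}|Q_{\mathrm{near}}|\le 3^{d_{\mathrm{near}}}$ and $\max_{D_1}|Q_{\mathrm{far}}|\le\prod_{|z_j|>2}(|z_j|+1)$. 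For the denominator I pick $w\in Z$ maximizing $|Q_{\mathrm{near}}|$; since every factor of $Q_{\mathrm{far}}$ satisfies $|w-z_j|\ge|z_j|-1$ on $D_1$, I obtain $\max_Z|P|\ge|A|\,\bigl(\max_Z|Q_{\mathrm{near}}|\bigr)\prod_{|z_j|>2}(|z_j|-1)$, and the lower bound applied to the monic degree-$d_{\mathrm{near}}$ polynomial $Q_{\mathrm{near}}$ gives $\max_Z|Q_{\mathrm{near}}|\ge\left(c_{d_{\mathrm{near}},\alpha}(Z)/(2e^{1/\alpha})\right)^{d_{\mathrm{near}}}$.

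Finally I assemble the ratio. The far-zero contributions combine into $\prod_{|z_j|>2}\frac{|z_j|+1}{|z_j|-1}\le 3^{d_{\mathrm{far}}}$, while the near part contributes $\left(\frac{3\cdot 2e^{1/\alpha}}{c_{d_{\mathrm{near}},\alpha}(Z)}\right)^{d_{\mathrm{near}}}$. Two elementary monotonicity facts then close the argument: allowing more balls only decreases the Cartan measure, so $c_{d_{\mathrm{near}},\alpha}(Z)\ge c_{d,\alpha}(Z)$, and covering $D_1$ by a single unit ball gives $c_{d,\alpha}(Z)\le 1$, whence $6e^{1/\alpha}/c_{d,\alpha}(Z)\ge 3$ and the harmless factor $3^{d_{\mathrm{far}}}$ is absorbed. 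Collecting the two pieces with $d=d_{\mathrm{near}}+d_{\mathrm{far}}$ yields exactly $\left(6e^{1/\alpha}/c_{d,\alpha}(Z)\right)^d$, as desired.
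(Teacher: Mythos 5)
Your proof is correct and follows essentially the same route as the paper: Cartan's lemma controls the monic part (equivalently, the leading coefficient) via the covering of a sublevel set containing $Z$, the zeros are split at radius $2$, the far zeros contribute a distortion factor $3^{d-d_1}$, and the near part is handled by the trivial bound $3^{d_1}$ on $D_1$ together with the Cartan estimate. If anything, your bookkeeping is slightly more careful than the paper's, since you explicitly invoke $c_{d_{\mathrm{near}},\alpha}(Z)\ge c_{d,\alpha}(Z)$ and $c_{d,\alpha}(Z)\le 1$ to absorb the stray factors into the exponent $d$, a point the paper glosses over.
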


\begin{proof}
Assume that $|P(z)|\le 1$ on $Z$. First, we prove that the absolute value $A$ of the leading coefficient of $P$ satisfies 
$$
A\le \left( \frac{2e^{1/\alpha}}{c_{d,\alpha}(Z)} \right)^d.
$$

Indeed, we have $Z\subset V_{1}(P)$. By the definition of $c_{d,\alpha}(Z)$ for every covering of $V_{1}(P)$ by $p$ disks $D_{r_1},\dots,D_{r_p}$ of the radii $r_1,\dots,r_d$ (which is also a covering of $Z$) we have $\sum_{i = 1}^d r_i^\alpha\ge c_{d,\alpha}(Z)^\alpha$. Denoting, as above, the absolute value of the leading coefficient of $P(z)$ by $A$ we have by the Cartan lemma that for a certain covering as above
$$
c_{d,\alpha}(Z)^\alpha\le\sum_{i = 1}^d r_i^\alpha\le e \left( \frac{2}{A^{1/ d}} \right)^\alpha.
$$

Now, we write $P(z) = A\prod_{j = 1}^d (z-z_j)$, and consider separately two cases:

\smallskip

1) All $|z_j|\le 2$. Thus, $\max_{D_1} |P(z)|\le A 3^d\le \left( \frac{2e^{1/\alpha}}{c_{d,\alpha}(Z)} \right)^d 3^d$.

\smallskip

2) For $j = 1,\dots,d_1 < d$, $|z_j|\le 2$, while $|z_j| > 2$ for $j = d_1 + 1,\dots,d$. Denote
$$
P_1(z) = A\prod_{j = 1}^{d_1} (z-z_j) ~,\quad P_2(z) = \prod_{j = d_1 + 1}^d (z-z_j),
$$
and notice that for any two points $v_1,v_2\in D_1$ we have $|{{P_2(v_1)}/ {P_2(v_2)}}| < 3^{d-d_1}$. Consequently we get
$$
\frac{\max_{D_1} |P(z)|}{\max_{Z}|P(z)|} < 3^{d-d_1}\frac{\max_{D_1}|P_1(x)|}{\max_{Z}|P_1(z)|}.
$$
All the roots of $P_1$ are bounded in absolute value by $2$, so by first part we have
$$
\frac {\max_{D_1}|P_1(z)|}{\max_{Z}|P_1(z)|}\le \left( \frac{2e^{1/\alpha}}{c_{d,\alpha}(Z)} \right)^d 3^{d_1}.
$$
Application of the inequality $H_\alpha(Z)\le c_{d,\alpha}(Z)^\alpha$ completes the proof.
\end{proof}

Let us stress a possibility to chose an optimal $\alpha$ in the bound of Theorem~\ref{thm:C}. Let
$$
K_d(Z) = \inf_{\alpha>0} \left( \frac{6e^{1/\alpha}}{c_{d,\alpha}(Z)} \right)^d ~,\quad K_d^H(Z) = \inf_{\alpha>0} \left( \frac{6e}{H_\alpha(Z)} \right)^{d\over\alpha}.
$$

\begin{cor} \label{cor:thmC}
Let $P(z)$ be a polynomial of degree $d$. Let $Z\subset D_1$. Then,
$$
\max_{D_1}|P(z)|\le K_d(Z)\max_{Z}|P(z)|\le K_d^H(Z)\max_{Z}|P(z)|.
$$
\end{cor}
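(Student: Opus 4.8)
The plan is to obtain this corollary directly from Theorem~\ref{thm:C} by optimizing the free parameter $\alpha$. The crucial structural feature is that Theorem~\ref{thm:C} holds for \emph{every} $\alpha > 0$ simultaneously, whereas the quantity $\max_{D_1}|P(z)|$ on its left-hand side, together with the common factor $\max_{Z}|P(z)|$, is independent of $\alpha$. This decoupling is exactly what permits passing to the infimum, and the invariants $K_d(Z)$ and $K_d^H(Z)$ are designed precisely as those infima.

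For the first inequality, I would fix an arbitrary $\alpha > 0$ and read off the left portion of Theorem~\ref{thm:C} as $\max_{D_1}|P(z)| \le \left( \frac{6e^{1/\alpha}}{c_{d,\alpha}(Z)} \right)^d \max_{Z}|P(z)|$. Since the left-hand side does not depend on $\alpha$, it lies below the right-hand side for all $\alpha$, hence below the infimum over $\alpha > 0$ of the right-hand side. As $\max_{Z}|P(z)|$ is a nonnegative constant that factors out of the infimum, this infimum equals $K_d(Z)\max_{Z}|P(z)|$, which gives $\max_{D_1}|P(z)| \le K_d(Z)\max_{Z}|P(z)|$.

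For the second inequality, I would invoke the termwise comparison already contained in Theorem~\ref{thm:C}, namely that for each $\alpha > 0$ one has $\left( \frac{6e^{1/\alpha}}{c_{d,\alpha}(Z)} \right)^d \le \left( \frac{6e}{H_\alpha(Z)} \right)^{d/\alpha}$; this is precisely its second inequality, resting on the relation $H_\alpha(Z) \le c_{d,\alpha}(Z)^\alpha$ used in that proof. Because this holds for every $\alpha$, taking the infimum over $\alpha > 0$ on both sides preserves the ordering (if $f(\alpha)\le g(\alpha)$ for all $\alpha$, then $\inf_\alpha f \le \inf_\alpha g$), so $K_d(Z) \le K_d^H(Z)$. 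Multiplying by the nonnegative constant $\max_{Z}|P(z)|$ then finishes the chain.

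There is essentially no hard step: all the analytic content sits in Theorem~\ref{thm:C}, and the corollary is a purely formal optimization. The only point deserving a moment's care is that an infimum need not be attained, so the conclusion should be stated with $K_d(Z)$ and $K_d^H(Z)$ as infima rather than as a value at some optimal $\alpha$; monotonicity of the infimum under a termwise inequality is all that the argument requires, and that is automatic.
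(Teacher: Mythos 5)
Your proposal is correct and matches the paper's intent exactly: the paper states Corollary~\ref{cor:thmC} without a separate proof, treating it as immediate from Theorem~\ref{thm:C} by taking the infimum over $\alpha>0$, since the left-hand side $\max_{D_1}|P(z)|$ and the factor $\max_{Z}|P(z)|$ do not depend on $\alpha$ and $K_d(Z)$, $K_d^H(Z)$ are defined as precisely those infima. Your handling of the second inequality via the termwise comparison already asserted in Theorem~\ref{thm:C} (which the paper derives from $H_\alpha(Z)\le c_{d,\alpha}(Z)^\alpha$) is likewise the intended argument.
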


\subsection{Geometric and analytic properties of the invariant $c_{d,\alpha}$} \label{geom.Cart}

Clearly, the invariant $c_{d,\alpha}(Z)$ is monotone in $Z$, that is, for $Z_1\subset Z_2$ we have $c_{d,\alpha}(Z_1) \le c_{d,\alpha}(Z_2)$. Also, for any $Z$, we have
%$$
%\rho_d(Z) = \sup\{\hat\rho (Z') : \text{$Z'\subset Z$ of cardinality exactly $d$}\},
%$$
%where $\hat\rho (Z')$ is the minimal distance between two different points of $Z'$.

\begin{prp} \label{d.points}
Let $\alpha >0$. Then, $c_{d,\alpha}(Z) > 0$ if and only if $Z$ contains more than $d$ points. In the latter case, $c_{d,\alpha}(Z)$ is greater than or equal to one half of the minimal distance between the points of $Z$.
\end{prp}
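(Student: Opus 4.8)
The plan is to prove the two directions of the equivalence separately, and then to observe that the quantitative lower bound falls out of the harder direction for free.

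For the easy implication, I would show that if $Z$ consists of at most $d$ points, then $c_{d,\alpha}(Z) = 0$. Indeed, covering each of the (at most $d$) points of $Z$ by its own disk of radius $r>0$ produces a covering by $p\le d$ disks with $\sum_{j=1}^p r_j^\alpha \le d\,r^\alpha$. Letting $r\to 0$ drives this sum to $0$, so the infimum defining $c_{d,\alpha}(Z)$ is $0$. This is exactly the contrapositive of the ``only if'' direction: $c_{d,\alpha}(Z)>0$ forces $Z$ to contain more than $d$ points.

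For the main implication, suppose $Z$ contains more than $d$ points, and fix $d+1$ distinct points $w_0,\dots,w_d\in Z$. The key observation is a pigeonhole argument: any covering of $Z$ by $p\le d$ disks $D_{r_1},\dots,D_{r_p}$ in particular covers these $d+1$ points, so since there are only $p\le d$ disks, some disk $D_{r_k}$ must contain at least two of the $w_i$, say $w_i$ and $w_j$ with $i\ne j$. Since two points of a disk of radius $r_k$ are at distance at most $2r_k$, we get $r_k\ge \frac{1}{2}|w_i-w_j|\ge \frac{1}{2}\delta$, where $\delta$ denotes the minimal distance between points of $Z$. Consequently $\sum_l r_l^\alpha\ge r_k^\alpha\ge(\delta/2)^\alpha$ for \emph{every} admissible covering, and passing to the infimum and then the $\alpha$-th root yields $c_{d,\alpha}(Z)\ge \delta/2$. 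This simultaneously establishes the ``if'' direction and the claimed quantitative bound.

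I do not expect a serious obstacle here, as the argument is essentially a single application of pigeonhole combined with the triangle inequality inside one disk. The only points requiring care are the reading of the ``$\min$'' in the definition as an infimum over coverings with strictly positive radii (so that the value $0$ in the easy direction is approached but never attained), and the treatment of infinite $Z$: in that case the global minimal distance $\delta$ may vanish, so to guarantee strict positivity of $c_{d,\alpha}(Z)$ one first extracts $d+1$ distinct points, whose mutual minimal distance $\delta'>0$ is automatically positive, and applies the same pigeonhole estimate to obtain $c_{d,\alpha}(Z)\ge \delta'/2>0$ irrespective of whether $\delta>0$.
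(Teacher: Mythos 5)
Your proof is correct and follows essentially the same route as the paper: tiny disks around at most $d$ points give the vanishing direction, and the pigeonhole argument (some disk among $p\le d$ must contain two of $d+1$ chosen points, forcing its radius to be at least half their distance) gives both positivity and the quantitative bound. Your extra remarks on reading the minimum as an infimum and on handling infinite $Z$ via $d+1$ extracted points are sensible refinements of the paper's terser statement, but the underlying argument is identical.
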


\begin{proof}
Any $d$ points can be covered by d disks with arbitrarily small radii. But, the radius of at least one disk among $d$ disks covering more than $d + 1$ different points is greater than or equal to the one half of a minimal distance between these points.
\end{proof}

The lower bound of Proposition~\ref{d.points} does not depend on $\alpha$. However, in general, this dependence is quite prominent.

\begin{exa}
Let $Z = [a,b]$. Then, for $\alpha\ge 1$ we have $c_{d,\alpha}(Z) = (b-a)/2$, while for $\alpha \le 1$ we have $c_{d,\alpha}(Z) = d^{{1\over\alpha}-1}(b-a)/2$.

Indeed, in the first case the minimum is achieved for $r_1 = (b-a)/2,r_2 = \dots = r_d = 0$, while in the second case for $r_1 = r_2 = \dots = r_d = (b-a)/2d$.
\end{exa}

% c_{d,\alpha}(Z) is monotone also in \alpha
\begin{prp} \label{compar.alpha}
Let $\alpha >\beta >0$. Then, for any $Z$
\begin{align} \label{eq:compar.alpha}
c_{d,\alpha}(Z) \le c_{d,\beta}(Z) \le d^{({1\over\beta}-{1\over\alpha})}c_{d,\alpha}(Z).
\end{align}
\end{prp}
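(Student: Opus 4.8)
The plan is to recognize inequality~\eqref{eq:compar.alpha} as nothing more than the standard nesting of the $\ell^\alpha$ and $\ell^\beta$ norms, applied to the vector of radii of an optimal covering, together with the structural constraint that at most $d$ balls are used. Throughout, for a covering of $Z$ by $p \le d$ balls of radii $r_1,\dots,r_p>0$, I would compare the two quantities $\left(\sum_{j=1}^p r_j^\alpha\right)^{1/\alpha}$ and $\left(\sum_{j=1}^p r_j^\beta\right)^{1/\beta}$, and then optimize over coverings in the appropriate direction.

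For the left inequality $c_{d,\alpha}(Z)\le c_{d,\beta}(Z)$, I would fix a covering attaining the minimum defining $c_{d,\beta}(Z)$. The elementary monotonicity of power sums, namely $\left(\sum_j r_j^\alpha\right)^{1/\alpha}\le\left(\sum_j r_j^\beta\right)^{1/\beta}$ for $\alpha>\beta$ (which follows from the subadditivity $\sum_j s_j^{\alpha/\beta}\le\left(\sum_j s_j\right)^{\alpha/\beta}$ with $s_j=r_j^\beta$ and exponent $\alpha/\beta\ge 1$), shows that this same covering yields an $\alpha$-value no larger than $c_{d,\beta}(Z)$. Since $c_{d,\alpha}(Z)$ is the minimum over all coverings of the $\alpha$-value, the bound follows.

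For the right inequality, I would instead fix a covering attaining the minimum defining $c_{d,\alpha}(Z)$, say with $p\le d$ balls. Here the relevant tool is H\"older's inequality with conjugate exponents $\alpha/\beta$ and $\alpha/(\alpha-\beta)$, which gives $\left(\sum_{j=1}^p r_j^\beta\right)^{1/\beta}\le p^{(1/\beta-1/\alpha)}\left(\sum_{j=1}^p r_j^\alpha\right)^{1/\alpha}$. The key structural point is that $p\le d$, so the factor $p^{1/\beta-1/\alpha}$ is dominated by $d^{1/\beta-1/\alpha}$ (the exponent being positive since $\alpha>\beta$). Feeding this optimal $\alpha$-covering into the definition of $c_{d,\beta}(Z)$ then produces $c_{d,\beta}(Z)\le d^{1/\beta-1/\alpha}c_{d,\alpha}(Z)$.

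There is no serious obstacle here: the proof is a direct application of two classical norm inequalities. The only point requiring a little care is the direction of the optimization — one must plug the minimizing covering for one invariant into the defining minimum for the other, in opposite directions for the two inequalities — together with the observation that it is precisely the constraint $p\le d$ (rather than the actual number of balls in the covering) that yields the clean dimensional factor $d^{1/\beta-1/\alpha}$.
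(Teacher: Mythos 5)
Your proof is correct and follows essentially the same route as the paper: both arguments plug the minimizing covering for one invariant into the definition of the other and invoke the standard comparison $\|r\|_\alpha \le \|r\|_\beta \le d^{(1/\beta - 1/\alpha)}\|r\|_\alpha$ of $\ell^\gamma$-norms on (at most) $d$ coordinates. Your version merely spells out the two elementary inequalities (power-sum subadditivity and H\"older) that the paper cites as ``standard comparison of the norms.''
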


\begin{proof}
Let $r = (r_1,\dots,r_d)$ and $\gamma >0$. Consider $||r||_\gamma = (\sum_{j = 1}^d r_j^\gamma)^{1\over\gamma}$. Then, by the definition, $c_{d,\gamma}(Z)$ is the minimum of $||r||_\gamma$ over all $r = (r_1,\dots,r_d)$ being the radii of $d$ balls covering $Z$. Now we use the standard comparison of the norms $||r||_\gamma$, that is, for any $x = (x_1,\dots,x_d)$ and for $\alpha >\beta >0$,
$$
||x||_\alpha \le ||x||_\beta \le d^{({1\over\beta}-{1\over\alpha})}||x||_\alpha.
$$

Take $r = (r_1,\dots,r_d)$ for which the minimum of $||r||_\beta$ is achieved, and we get
$$
c_{d,\alpha}(Z) \le ||r||_\alpha \le ||r||_\beta = c_{d,\beta}(Z).
$$

Now taking $r$ for which the minimum of $||r||_\alpha$ is achieved, exactly in the same way we get the second inequality.
\end{proof}

Now, we compare $c_{d,\alpha}(Z)$ with some other metric invariants which may be sometimes easier to compute. In each case we do it for the most convenient value of $\alpha$. Then, using the comparison inequalities of Proposition~\ref{compar.alpha}, we get corresponding bounds on $c_{d,\alpha}(Z)$ for any $\alpha > 0$. In particular, we can easily produce a simple lower bound for $c_{d,2}(Z)$ through the measure of $Z$:

\begin{prp} \label{measure}
For any measurable $Z\subset D_1$ we have
$$
c_{d,2}(Z)\ge ({{\mu_2(Z)}/ {\pi}})^{1/2}.
$$
\end{prp}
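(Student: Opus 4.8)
The plan is to exploit the fact that the exponent $\alpha = 2$ in the definition of the Cartan measure matches the scaling of the two-dimensional Lebesgue measure: a disk $D_r$ of radius $r$ has area $\mu_2(D_r) = \pi r^2$, so a sum of the squares $r_j^2$ translates directly into a sum of areas. This immediately suggests comparing $c_{d,2}(Z)$ with $\mu_2(Z)$ through an arbitrary admissible covering of $Z$.

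First I would fix any covering of $Z$ by disks $D_{r_1},\dots,D_{r_p}$ with $p\le d$, as permitted in the definition of $c_{d,2}(Z)$. Since these disks cover $Z$, monotonicity and countable subadditivity of Lebesgue measure give
$$
\mu_2(Z)\le\mu_2\left(\bigcup_{j = 1}^p D_{r_j}\right)\le\sum_{j = 1}^p\mu_2(D_{r_j}) = \pi\sum_{j = 1}^p r_j^2.
$$
Dividing by $\pi$ yields $\sum_{j = 1}^p r_j^2\ge\mu_2(Z)/\pi$ for every such covering.

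Finally I would pass to the minimum over all admissible coverings. As the displayed inequality holds for each individual covering, it holds in particular for a minimizing one, so that $c_{d,2}(Z)^2 = \min\sum_{j = 1}^p r_j^2\ge\mu_2(Z)/\pi$; taking square roots gives the asserted bound.

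I do not expect any genuine obstacle here: the only point worth keeping in mind is that restricting to at most $d$ disks only enlarges the minimum relative to the unrestricted infimum, so the estimate is all the more valid. This is precisely the reason $c_{d,2}(Z)$ dominates the square root of the Hausdorff content, in agreement with the inequality $H_\alpha^{1/\alpha}(Z)\le c_{d,\alpha}(Z)$ recorded earlier.
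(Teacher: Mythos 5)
Your proof is correct and is essentially the same as the paper's: both compare the area of $Z$ with the total area $\pi\sum_j r_j^2$ of an arbitrary admissible covering by at most $d$ disks and then minimize. You merely spell out the subadditivity step and the passage to the minimum, which the paper leaves implicit.
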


\begin{proof}
For any covering of $Z$ by $d$ disks $D_1,\dots,D_d$ of the radii $r_1,\dots,r_d$ we have $\pi(\sum_{i = 0}^d r^2_i)\ge\mu_2(Z)$.
\end{proof}

However, in order to deal with discrete or finite subsets $Z\subset D_1$ we have to compare $c_{d,\alpha}(Z)$ with the covering number $M(\e,Z)$ (which is, by definition, the minimal number of $\e$-disks covering $Z$).

\begin{defin}
Let $Z\subset D_1$. Define
$$
\omega_{cd}(Z) = \sup_\e\e(M(\e,Z)-d)^{1/ 2} ~,\quad \rho_d(Z) = d\e_0,
$$
where $\e_0$ is the minimal $\e$ for which there is a covering of $Z$ with $d$ $\e$-disks. Note that, writing $y = M(\e,Z) = \Psi(\e)$, and taking the inverse $\e = \Psi^{-1}(y)$, we have $\e_0 = \Psi^{-1}(d)$.
\end{defin}

As it was mentioned above, a very similar invariant
$$
\omega_d(Z) = \sup_\e\e(M(\e,Z)-d)
$$
was introduced and used in~\cite{Y3} in the real case. We compare $\omega_{cd}$ and $\omega_d$ below.

\begin{prp}
Let $Z\subset D_1$. Then, $\omega_{cd}(Z)/2\le c_{d,2}(Z)\le c_{d,1}(Z)\le\rho_d(Z)$.
\end{prp}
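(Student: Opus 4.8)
The plan is to establish the three inequalities
$$
\omega_{cd}(Z)/2\le c_{d,2}(Z)\le c_{d,1}(Z)\le\rho_d(Z)
$$
from right to left, treating them as three separate comparisons. The middle inequality $c_{d,2}(Z)\le c_{d,1}(Z)$ is immediate: it is precisely the first inequality of Proposition~\ref{compar.alpha} applied with $\alpha = 2$, $\beta = 1$, so no work is needed there beyond a citation.

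For the rightmost inequality $c_{d,1}(Z)\le\rho_d(Z)$, I would unwind the definitions. By definition $\rho_d(Z) = d\e_0$, where $\e_0$ is the smallest radius such that $Z$ can be covered by $d$ disks of radius $\e_0$. Taking this optimal covering, all $d$ radii equal $\e_0$, so the candidate vector $r = (\e_0,\dots,\e_0)$ is admissible in the minimization defining $c_{d,1}(Z)$. Hence $c_{d,1}(Z)\le\sum_{j = 1}^d\e_0 = d\e_0 = \rho_d(Z)$. This step is routine.

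The leftmost inequality $\omega_{cd}(Z)/2\le c_{d,2}(Z)$ is where I expect the only real content, and it amounts to converting a covering that is optimal for $c_{d,2}$ into a covering by equal-radius disks so as to control $M(\e,Z)$. First I would take the optimal covering realizing $c_{d,2}(Z)$, namely $p\le d$ disks of radii $r_1,\dots,r_p$ with $\left(\sum_{j = 1}^p r_j^2\right)^{1/2} = c_{d,2}(Z)$. The goal is to bound $\e\,(M(\e,Z)-d)^{1/2}$ uniformly in $\e$. The natural idea is that each disk $D_{r_j}$ can itself be covered by roughly $(r_j/\e)^2$ disks of radius $\e$; more precisely one needs a constant so that a disk of radius $r_j$ is covered by at most $C(r_j/\e)^2 + 1$ disks of radius $\e$, and the factor $1/2$ on the left is exactly where this packing constant must be absorbed. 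Summing over $j$ gives $M(\e,Z)\le \sum_{j}\bigl(C(r_j/\e)^2 + 1\bigr)\le C\e^{-2}c_{d,2}(Z)^2 + d$, whence $\e^2(M(\e,Z)-d)\le C\,c_{d,2}(Z)^2$ for every $\e$, and taking the supremum over $\e$ and a square root yields $\omega_{cd}(Z)\le C^{1/2}c_{d,2}(Z)$.

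The main obstacle is therefore purely geometric: pinning down the constant in the elementary covering estimate ``a disk of radius $r$ is covered by at most $C(r/\e)^2$ disks of radius $\e$'' sharply enough that $C^{1/2} = 2$ works, i.e.\ $C = 4$. A clean way to force the constant $2$ is to place the $\e$-disks on a grid of mesh comparable to $\e$ and count lattice points inside the enlarged disk $D_{r_j + \e}$; choosing the grid geometry carefully (or equivalently using the trivial bound that $p\le d$ disks of radii $r_j$ are contained in a single disk of radius $2\,c_{d,2}(Z)$ after translation is \emph{not} available, so one argues disk-by-disk) should deliver the stated factor. I would verify the packing constant explicitly at this point, since it is the one nonformal ingredient; everything else follows by substitution into the definitions.
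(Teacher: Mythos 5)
Your proposal follows essentially the same route as the paper for all three inequalities: the middle one via Proposition~\ref{compar.alpha}, the right one by exhibiting the equal-radius covering as a competitor in the minimization, and the left one by refining an optimal $c_{d,2}$-covering into $\e$-disks to bound $M(\e,Z)-d$ by $(4/\e^2)\sum_j r_j^2$. The packing constant $C=4$ that you flag as the one nonformal ingredient is exactly the bound the paper asserts (``at most $4r_j^2/\e^2$ $\e$-disks'') without further justification, so your treatment is on par with the original.
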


\begin{proof}
To prove the upper bound for $c_{d,1}(Z)$ we notice that it is the infimum of the sum of the radii in all the coverings of $Z$ with $d$ disks, while $\rho_d(Z)$ is such a sum for one specific covering.

\smallskip

To prove the lower bound, let us fix a covering of $Z$ by $d$ disks $D_i$ of the radii $r_i$ with $c_{d,2}(Z) = (\sum_{i = 0}^d r_i^2)^{1/2}$. Let $\e > 0$. Now, for any disk $D_j$ with $r_j\ge\e$ we need at most $4r^2_j /\e^2$ $\e$-disks to cover it. For any disk $D_j$ with $r_j\le\e$ we need exactly one $\e$-disk to cover it, and the number of such $D_j$ does not exceed $d$. So, we conclude that $M(\e,Z)$ is at most $d + (4/\e^2)\sum_{i = 0}^d r_i^2$. Thus, we get $c_{d,2}(Z) = (\sum_{i = 0}^d r_i^2)^{1/2}\ge {\e / 2} (M(\e,Z)-d)^{1/2}$. Taking supremum with respect to $\e>0$ we get $c_{d,2}(Z)\ge\omega_{cd}(Z)/2$.
\end{proof}

Since $M(\e,Z)$ is always an integer, we have
$$
\omega_d(Z)\ge\omega_{cd}(Z).
$$

For $Z\subset D_1$ of positive plane measure, $\omega_d(Z) = \infty$ while $\omega_{cd}(Z)$ remains bounded (in particular, by $\rho_d(Z)$).

\smallskip

Some examples of computing (or bounding) $\omega_d(Z)$ for ``fractal" sets $Z$ can be found in~\cite{Y3}. Computations for $\omega_{cd}(Z)$ are essentially the same. In particular, in an example given in~\cite{Y3} in connection to~\cite{Fav} we have that for $Z = Z_r = \{1,{1 / {2^r}},{1 / {3^r}},\dots,{1 / {k^r}},\dots\}$
$$
\omega_d(Z_r)\asymp\frac{r^r}{(r + 1)^{r + 1} d^r} ~,\quad\omega_{cd}(Z_r)\asymp\frac{(2r + 1)^r}{(2r + 2)^{r + 1} d^{r + 1/2}}.
$$

The asymptotic behavior here is for $d\to\infty$, as in~\cite{Fav}.

\subsection{An example}

We conclude this section with one very specific example. Let
$$
Z = Z(d,h) = \{z_1,z_2,\dots,z_{2d-1},z_{2d}\}~, \quad x_i\in {\mathbb C}, ~d\ge 2.
$$

We assume that
%$|z_{2i-1} - z_{2i}| = 2h,\ h\ll 1$, $i = 1,\dots,d$, while the distances $(z_{2i-1},z_{2i})$, $i = 1,\dots,d$, are at least $2\eta \gg h$. So
$Z$ consists of $d$, $2\eta$-separated couples of points, with points in each couple being in a distance $2h$. Let $2D(Z)$ be the diameter of the smallest disk containing $Z$, where $h\ll 1$, and $2\eta\gg h$.

\begin{prp} \label{example}
Let $Z$ be as above. Then,
\begin{enumerate}
\item $\omega_d(Z) = dh$.
\item $\omega_{cd}(Z) = \sqrt dh$.
\item For $\alpha >0$, we have $c_{d,\alpha}(Z) \le d^{1\over\alpha}h$.
\item For $\alpha \gg 1$, we have $c_{d,\alpha}(Z) = d^{1\over\alpha}h$.
\item For $\kappa = [\log_d({{D(Z)}\over h})]^{-1}$, we have $c_{d,\kappa}(Z)\ge\eta$.
\end{enumerate}
\end{prp}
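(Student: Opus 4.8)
The configuration $Z=Z(d,h)$ consists of $d$ pairs of points; within each pair the two points are at distance $2h$ (so each pair has diameter $2h$), while distinct pairs are $2\eta$-separated, and the whole set sits in a disk of diameter $2D(Z)$, with $h\ll 1$ and $2\eta\gg h$. The plan is to prove the five items in order, reusing the covering-number bounds already developed. For items (1) and (2) I would compute the covering function $M(\e,Z)$ explicitly as a step function: for $\e<h$ we need one disk per point, so $M(\e,Z)=2d$; for $h\le\e<\eta$ each pair is covered by a single $\e$-disk but distinct pairs still need distinct disks, so $M(\e,Z)=d$; and for $\e\ge\eta$ (up to the scale $D(Z)$) the count drops below $d$. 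Then $M(\e,Z)-d$ equals $d$ on $[0,h)$ and $0$ on $[h,\eta)$, so the supremum defining $\omega_d$ is attained as $\e\uparrow h$, giving $\omega_d(Z)=\sup_\e\e(M(\e,Z)-d)=h\cdot d=dh$, and likewise $\omega_{cd}(Z)=\sup_\e\e(M(\e,Z)-d)^{1/2}=h\cdot d^{1/2}=\sqrt d\,h$. This is just careful bookkeeping of the step function.

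**The Cartan measure bounds.** For item (3) I would exhibit an explicit covering: place one disk of radius $h$ on each of the $d$ pairs (each such disk covers its pair since the two points are at distance $2h$). This uses exactly $d\le d$ disks with all radii equal to $h$, so by the definition of $c_{d,\alpha}$ we get $c_{d,\alpha}(Z)\le\left(\sum_{j=1}^d h^\alpha\right)^{1/\alpha}=d^{1/\alpha}h$. For item (4), the matching lower bound for large $\alpha$, I would argue that any covering of $Z$ by at most $d$ disks must, by a pigeonhole on the $d$ pairs, either use a disk whose radius is at least $\eta$ (if one disk is forced to meet two distinct pairs) or else cover each pair by its own disk of radius at least $h$. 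When $\alpha\gg1$ the $\ell^\alpha$-norm is dominated by the largest radius, so a single large-radius disk makes the cost at least $\eta\gg d^{1/\alpha}h$, whereas the $d$-disks-of-radius-$h$ configuration has cost exactly $d^{1/\alpha}h$, which is therefore optimal; hence equality $c_{d,\alpha}(Z)=d^{1/\alpha}h$.

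**Item (5) and the main obstacle.** Item (5) is the subtle one and is where I expect the real work to lie. The choice $\kappa=[\log_d(D(Z)/h)]^{-1}$ is engineered precisely so that $d^{1/\kappa}=D(Z)/h$, i.e. $d^{1/\kappa}h=D(Z)$. The claim $c_{d,\kappa}(Z)\ge\eta$ should follow by showing that any covering of $Z$ by $p\le d$ disks has $\kappa$-cost at least $\eta$. The idea is to split into two regimes depending on whether the covering uses a disk that spans two distinct pairs. If some disk has radius $\ge\eta$, the cost is immediately $\ge\eta$ and we are done. If not, then no disk meets two pairs, so each of the $d$ pairs is covered by disks disjoint from the others, each of radius $\ge h$; since there are at most $d$ disks total and $d$ pairs, every pair gets exactly one disk of radius $\ge h$, giving $\kappa$-cost $\ge(\sum_{j=1}^d h^\kappa)^{1/\kappa}=d^{1/\kappa}h=D(Z)$. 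The main obstacle is justifying the dichotomy cleanly: one must verify that a disk of radius $<\eta$ genuinely cannot straddle two $2\eta$-separated pairs, and then confirm that the comparison $D(Z)\ge\eta$ (which holds because the full set, of diameter $2D(Z)$, must contain two separated pairs and hence $D(Z)\ge\eta$) lets the second-regime bound $D(Z)$ absorb the desired $\eta$. Assembling these cases into the single inequality $c_{d,\kappa}(Z)\ge\eta$ is the delicate step; the rest is routine.
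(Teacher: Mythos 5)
Your proof follows essentially the same route as the paper's: the same step-function computation of $M(\e,Z)$ for items (1)--(2), the same one-disk-per-couple covering of radius $h$ for items (3)--(4), and for item (5) the same dichotomy (a disk meeting two distinct couples has radius at least $\eta$; otherwise there are exactly $d$ disks each of radius at least $h$, giving cost $d^{1/\kappa}h = D(Z)\ge\eta$). Your write-up is, if anything, slightly more explicit than the paper's at exactly the points you flag as delicate, so there is nothing to correct.
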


\begin{proof}
For $\e > h$, we have $M(\e,Z)\le d$, and hence $M(\e,Z)- d$ is negative. For $\e < h$, we have $M(\e,Z) = 2d$, and $M(\e,Z)- d = d$. Thus the supremum of $\e(M(\e,Z)- d)$, or the supremum of $\e(M(\e,Z)- d)^{1\over 2}$, is achieved as $\e < h$ tends to $h$. Therefore, $\omega_d(Z) = dh$, and $\omega_{cd}(Z) = \sqrt dh$.

\smallskip

Covering each couple with a separate ball of radius $h$, we get for any $\alpha >0$ that $c_{d,\alpha}(Z)\le d^{1\over\alpha}h$. For $\alpha \gg 1$ it is easy to see that this uniform covering is minimal. Thus, for such $\alpha$ we have the equality $c_{d,\alpha}(Z) = d^{1\over\alpha}h$.

\smallskip

Now let us consider the case of a ``small'' $\alpha = \kappa$. Take a covering of $Z$ with certain disks $D_j$, $j \le d$. If there is at least one disk $D_j$ containing three points of $Z$ or more, the radius of this disk is at least $\eta$. Thus, for this covering $(\sum_{j = 1}^d r_j^\kappa)^{1\over\kappa}\ge\eta$. If each disk in the covering contains at most two points, it must contain exactly two, otherwise these disks could not cover all the $2d$ points of $Z$. Hence, the radius of each disk $D_j$ in such covering is at least $h$, an their number is exactly $d$. We have, by the choice of $\kappa$, that $(\sum_{j = 1}^d r_j^{\kappa})^{1\over {\kappa}}\ge d^{1\over {\kappa}}h = D(Z)\ge\eta$.
\end{proof}

Proposition~\ref{example} shows that $c_{d,1}(Z)\le dh$, while we have $c_{d,\kappa}(Z)\ge\eta$. So using $\alpha = 1$ and $\alpha = \kappa$ in the Remez-type inequality of Theorem~\ref{thm:C} we get two bounds for the constant $K_d(Z):$
\begin{align} \label{boundsK}
K_d(Z) \le \left( \frac{6e}{dh} \right)^d \quad \text{or} \quad K_d(Z) \le \left( \frac{6e^{1/\kappa}}{\eta} \right)^d.
\end{align}

But $e^{1/\kappa} = e^{\log_d({{D(Z)}\over h})} = ({{D(Z)}\over h})^{1\over {\ln d}}$. So the second bound of \eqref{boundsK} takes a form
$$
K_d(Z) \le \left( \frac{6D(Z)}{\eta^{\ln d} h} \right)^{d\over {\ln d}}.
$$

We see that for $d\ge 3$ and for $h\to 0$ the asymptotic behavior of this last bound, corresponding to $\alpha = \kappa$, is much better than of the first bound in \eqref{boundsK}, corresponding to $\alpha = 1$. Notice, that $\kappa$ depends on $h$ and $D(Z)$, i.e.\ on the specific geometry of the set $Z$.

\section{Remez inequality} \label{Remez.Ineq}

Now, we present a Remez-type inequality for $(s,p)$-valent functions. We recall that by Proposition~\ref{Any.SP} above, any analytic function in an open neighborhood $U$ of the closed disk $D_R$ is $(s,p(s))$-valent in $D_R$ for any $s$ with a certain sequence $p(s)$. Consequently, the following theorem provides a non-trivial information for any analytic function in an open neighborhood of the unit disk $D_1$. Of course, this results becomes really interesting only in cases where we can estimate $p(s)$ explicitly.

\begin{thm}
Let $f$ be an analytic function in an open neighborhood $U$ of the closed disk $D_1$. Assume that $f$ has in $D_1$ exactly $s$ zeroes, and that it is $(s,p)$-valent in $D_1$. Let $Z$ be a subset in the interior of $D_1$, and put $\rho = \rho(Z) = \min\{\eta : Z\subset D_\eta\}$. Then, for any $R < 1$ function $f$ satisfies
$$
\max_{D_{R}} |f(z)|\le\sigma_p(R,\rho) K_s(Z)\max_{Z}|f(z)|,
$$
where $\sigma_p(R,\rho) = \left( \frac{1 + R}{1-R}\cdot\frac{1 + \rho}{1-\rho} \right)^{2p}$.
\end{thm}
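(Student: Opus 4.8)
The plan is to reduce the statement for $f$ to the polynomial Remez inequality of Corollary~\ref{cor:thmC} by factoring $f$ through the Distortion theorem~\ref{thm:disto}. Since $f$ has exactly $s$ zeroes $x_1,\dots,x_s$ in $D_1$ and is $(s,p)$-valent there, Theorem~\ref{thm:disto} furnishes the degree-$s$ polynomial $P(x)=A\prod_{j=1}^s(x-x_j)$ for which $g:=f/P$ is regular, non-vanishing and $p$-valent in $D_1$, together with the two-sided pointwise estimate $\left(\frac{1-|x|}{1+|x|}\right)^{2p}\le |g(x)|\le\left(\frac{1+|x|}{1-|x|}\right)^{2p}$ on $D_1$. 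The whole argument consists in propagating these bounds, together with Corollary~\ref{cor:thmC} applied to $P$ (whose degree is exactly $s$, matching the constant $K_s(Z)$), and then checking that the normalizing constant $A$ cancels throughout.

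First I would bound $g$ on the smaller disk. For $x$ with $|x|\le R$, since $t\mapsto(1+t)/(1-t)$ is increasing on $[0,1)$, the upper distortion estimate gives $|g(x)|\le\left(\frac{1+R}{1-R}\right)^{2p}$. Hence $\max_{\overline{D_R}}|f|=\max_{\overline{D_R}}|Pg|\le\left(\frac{1+R}{1-R}\right)^{2p}\max_{\overline{D_R}}|P|$, and since $\overline{D_R}\subset D_1$ we may replace $\max_{\overline{D_R}}|P|$ by $\max_{D_1}|P|$. Applying Corollary~\ref{cor:thmC} to the degree-$s$ polynomial $P$ then yields $\max_{D_1}|P|\le K_s(Z)\max_Z|P|$.

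It remains to pass from $\max_Z|P|$ back to $\max_Z|f|$, and this is where the lower distortion bound enters. For $z\in Z$ we have $|z|\le\rho$, and since $t\mapsto(1-t)/(1+t)$ is decreasing on $[0,1)$, the lower estimate gives $|g(z)|\ge\left(\frac{1-\rho}{1+\rho}\right)^{2p}$. As $g$ does not vanish, $|P(z)|=|f(z)|/|g(z)|\le\left(\frac{1+\rho}{1-\rho}\right)^{2p}|f(z)|$, so $\max_Z|P|\le\left(\frac{1+\rho}{1-\rho}\right)^{2p}\max_Z|f|$. Chaining the three estimates and using $\left(\frac{1+R}{1-R}\right)^{2p}\left(\frac{1+\rho}{1-\rho}\right)^{2p}=\sigma_p(R,\rho)$ delivers the claimed inequality.

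There is no deep obstacle here; the content is the clean matching of the two earlier results. The only points requiring care are essentially bookkeeping: verifying the two monotonicities that let one replace $|x|$ by $R$ and $|z|$ by $\rho$ in the distortion factors, confirming that $\deg P=s$ exactly (which is precisely why the hypothesis that $f$ has exactly $s$ zeroes in $D_1$ is needed, and why the constant is $K_s(Z)$ rather than $K_p(Z)$), and observing that the normalization $A$ from Theorem~\ref{thm:disto} is irrelevant, since Corollary~\ref{cor:thmC} is homogeneous in $P$ and $A$ cancels between $\max_{D_1}|P|$ and $\max_Z|P|$.
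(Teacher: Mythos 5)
Your proposal is correct and follows essentially the same route as the paper: apply the Distortion theorem to $g=f/P$ with $P$ the degree-$s$ polynomial sharing the zeroes of $f$, use the lower bound on $|g|$ over $Z$ to pass from $f$ to $P$, invoke the polynomial Remez inequality for $P$ on $D_1$, and finish with the upper bound on $|g|$ over $D_R$; the paper merely normalizes $\max_Z|f|\le 1$ instead of carrying the factor explicitly. Your additional remarks on the monotonicity of $t\mapsto(1+t)/(1-t)$ and the cancellation of the normalizing constant $A$ are correct bookkeeping that the paper leaves implicit.
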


\begin{proof}
Assume that $|f(x)|$ is bounded by $1$ on $Z$. Let $x_1,\dots, x_s$ be zeroes of $f$ in $D_1$. Consider, as in Theorem~\ref{thm:disto}, the polynomial
$$
P(x) = A\prod_{j = 1}^l(x-x_j),
$$
where the coefficient $A$ is chosen in such a way that the constant term in the Taylor series for $g(x) = f(x)/P(x)$ is equal to $1$. Then by Theorem~\ref{thm:disto} for $g$ we have
$$
\left( \frac{1-|x|}{1 + |x|} \right)^{2p}\le | g(x) |\le \left( \frac{1 + |x|}{1-|x|} \right)^{2p}.
$$

We conclude that $P(x)\le ({\frac{1 + \rho}{1-\rho}})^{2p}$ on $Z$. Hence by the polynomial Remez inequality provided by Theorem~\ref{thm:C} we obtain
$$
|P(x)|\le K_s(Z) \left( \frac{1 + \rho}{1-\rho} \right)^{2p}
$$
on $D_1$. Finally, we apply once more the bound of Theorem~\ref{thm:disto} to conclude that
$$
|f(x)| \le K_s(Z) \left( \frac{1 + R}{1-R} \right)^{2p} \left(\frac{1 + \rho}{1-\rho} \right)^{2p}
$$
on $D_{R}$.
\end{proof}

\end{document}